\newtheorem{theo}{Theorem}
\newtheorem{defn}[theo]{Definition}
\newtheorem{ex}[theo]{Example}
\newcommand{\Z}{\mathbb{Z}}
\newcommand{\N}{\mathbb{N}}
\newcommand{\NZ}{\mathbb{M}}
\newcommand{\s}{\sigma}
\newcommand{\AZ}{\mathcal{A}^\mathbb{Z}}
\title{A note on the definition of sliding block codes and the Curtis-Hedlund-Lyndon Theorem}
\author{
\small{Marcelo Sobottka}\\
\footnotesize{UFSC -- Department of Mathematics}\\
\footnotesize{88040-900 Florian\'{o}polis - SC, Brazil}\\
\footnotesize{\texttt{sobottka@mtm.ufsc.br}}
\and
\small{Daniel Gon\c{c}alves}\\
\footnotesize{UFSC -- Department of Mathematics}\\
\footnotesize{88040-900 Florian\'{o}polis - SC, Brazil}\\
\footnotesize{\texttt{daemig@gmail.com}}
}
\date{}
\begin{document}

\maketitle

\begin{abstract} In this note we propose an alternative definition for sliding block codes between shift spaces. This definition coincides with the usual definition in the case that the shift space is defined on a finite alphabet, but it encompass a larger class of maps when the alphabet is infinite. In any case, the proposed definition keeps the idea that a sliding block code is a map with a local rule. Using this new definition we prove that the Curtis-Hedlund-Lyndon Theorem always holds for shift spaces over countable alphabets.
\end{abstract}

% ---------------------------- INTRODUCTION ------------------------------------

\section{Introduction}

Let $\NZ$ be a monoid, and denote as $1$ the identity in $\NZ$. Given $g,h\in\NZ$, let $gh$ denote the operation of $g$ with $h$ in $\NZ$. Given a countable alphabet $A$ we consider it with the discrete topology and define $A^\NZ$ as the set of all sequences over $A$ indexed by $\NZ$. Given $\mathbf{x}=(x_i)_{i\in \NZ}\in A^\NZ$ and  $N\subset\NZ$ we denote by $\mathbf{x}_N$ the restriction of $\mathbf{x}$ to the indices in $N$, that is, $\mathbf{x}_N:=(x_i)_{i\in N}\in A^N$.

On $A^\NZ$ we consider the product topology, which makes $A^\NZ$ a Hausdorff and totally disconnected topological space. Given a finite subset $D\subset \NZ$, we define the cylinder given by $D$ and $(a_i)_{i\in D}\in A^{|D|}$ as the set
$$\big[(a_i)_{i\in D}\big]_D:=\{\mathbf{x}\in A^\NZ:\ x_i=a_i,\ \forall i\in D\}.$$
When $D=\{g\}$ we will denote the cylinder which fix the symbol $b$ in the entry $g$ simply as $[b]_g$. Notice that each cyclinder above is clopen and, furthermore, the collection of all cylinders as above forms a basis for the produt topology in $A^\NZ$.

Given $g\in\NZ$, the translation on $A^\NZ$ defined by $g$ is the map $\s^g:A^\NZ\to A^\NZ$, given by $\s^g\big((x_i)_{i\in\NZ}\big)=(x_{gi})_{i\in\NZ}$. A shift space over the alphabet $A$ is a set $\Lambda\subset A^\NZ$ which is closed under the topology of $A^\NZ$ and  invariant by translations, that is, $\s^g(\Lambda)\subset \Lambda$ for all $g\in\NZ$. We consider on a shift space $\Lambda$ the induced topology from $A^\NZ$. Given a finite set of indexes $N\subset\NZ$, let $W_N(\Lambda)\subset A^N$ denote the set of all finite configurations of $A^N$ that appear in some sequence of $\Lambda$, that is,

$$W_N(\Lambda):=\{(w_i)_{i\in N}\in A^N:\ \exists \ x\in\Lambda \text{ s.t. } x_i=w_i\ \forall i\in N\}.$$

Note that, if $\Lambda$ is a shift space and $M\subset\NZ$ is a translation of $N\subset\NZ$, that is $M=gN$ for some $g\in\NZ$, then $(w_j)_{j\in M}\in W_M(\Lambda)$ if, and only if, there exists $(v_i)_{i\in N}\in W_N(\Lambda)$ such that  $w_j=v_i$ whenever $j=gi$. In other words, $W_M(\Lambda)$ and $W_N(\Lambda)$ contain the same configurations modulo translation.\\

\begin{defn}[Classical sliding block codes]\label{SBC_classic} Let $A$ and $B$ be two countable alphabets and let  $\Lambda\subset A^\NZ$ be a shift space. A map $\Phi:\Lambda\to B^\NZ$ is a sliding block code if there exists a finite set of indexes $N\subset\NZ$ and a local rule $\phi:W_N(\Lambda)\to B$ such that, for all $g\in\NZ$ and $\mathbf{x}=(x_i)_{i\in\NZ}\in\Lambda$, it holds that $$\big(\Phi(\mathbf{x})\big)_g=\phi\big(\s^g(\mathbf{x})_N\big).$$
\end{defn}

Intuitively speaking, we say that $\Phi$ is a sliding block code if the symbol $\big(\Phi(\mathbf{x})\big)_g$ is a function on the configuration of $\mathbf{x}$ at a finite number of indexes (more specifically at the indexes in $gN$, called the neighborhood of $x_g$).
When $A$ is finite, the Curtis-Hedlund-Lyndon Theorem (see \cite[Theorem 1.8.1]{Ceccherini-Silberstein--Coornaert}) states that a map $\Phi:\Lambda\to B^\NZ$ is a sliding block code if, and only if, it is continuous and invariant by translations (that is, $\Phi\circ\s^g=\s^g\circ \Phi$ for all $g\in\NZ$). However, if $A$ is infinite, then it is possible to construct continuous shift-commuting maps which do not satisfy Definition \ref{SBC_classic}. For instance, the map given in Example \ref{example1} below is continuous and invariant by translations, but it is not possible to describe it via a local rule as in Definition \ref{SBC_classic}. This difference between the finite alphabet case and the infinite alphabet case arises from the fact that in the first case shift spaces are always compact spaces, while in the second case shift spaces are always non-compact (frequently they are not even locally compact).

In \cite{Ceccherini-Silberstein--Coornaert}, the authors give a version of the Curtis-Hedlund-Lyndon Theorem for maps defined on shift spaces over infinite alphabets. More specifically, it was proved that a map defined on a shift space over an infinite alphabet satisfies Definition \ref{SBC_classic} if, and only if, it is uniformly continuous and shift commuting (see \cite[Theorem 1.9.1]{Ceccherini-Silberstein--Coornaert}).

In this paper, building from the ideas in \cite{GSS}, where it was proved a version of the Curtis-Hedlund-Lyndon Theorem for the Ott-Tomforde-Willis compactification of one-sided shift spaces over infinite alphabets \cite{Ott_et_Al2014} and from the ideas in \cite{GSS1}, where some weaker versions of Curtis-Hedlund-Lyndon Theorem were proved for a compactification for two-sided shift spaces over infinite alphabets, we propose an alternative definition for sliding block codes (see Definition \ref{SBC_alternative}). This new definition coincides with the classical definition when defined on shift spaces over finite alphabets, but enlarges the class of sliding block codes when dealing with shift spaces over infinite alphabets. More specifically, we will consider maps $\Phi:\Lambda\to B^\NZ$ such that, for all $\mathbf{x}\in\Lambda$ and  $g\in\NZ$, the symbol $\bigl(\Phi(\mathbf{x})\bigr)_g$ depends only on a finite number of entries
$(x_j)_{j\in I}$, for some finite set of indexes $I\subset\NZ$ that depend only on the configuration of $\mathbf{x}$ ``around'' $x_g$. Such maps also have local rules, however these local rules are defined using a family of possible neighborhoods. The inspiration for considering these types of maps comes from the notion of variable length Markov processes, see \cite{Rissanen1983}. Our proposed generalization of sliding block codes becomes natural once we rewrite the definition of classical sliding block codes in an equivalent way, so that we can see classical sliding block codes exactly as the maps $\Phi:\Lambda\to B^\NZ$ such that, for all $g\in\NZ$, the map $\big(\Phi(\cdot)\big)_g:\Lambda\to B$ is a simple function (see Section \ref{Generalized_SBC}).

\section{Generalized sliding block codes}\label{Generalized_SBC}

Suppose that $A$ is finite and $\Phi:\Lambda\to B^\NZ$ is a sliding block code. It follows from the Curtis-Hedlund-Lyndon Theorem that $\Phi$ is continuous and shift commuting. So, for each symbol $b\in B$, the set $C_b:=\Phi^{-1}([b]_{\mathbf{1}_\NZ})$ is a clopen set of $\Lambda$ and therefore it can be written as a (possibly empty) union of disjoint cylinders of $\Lambda$. In particular, since $\Lambda$ is compact, each $C_b$ is a finite (possibly empty) union of cylinders of $\Lambda$ and, furthermore, $\{C_b\}_{b\in B}$ is a partition of $\Lambda$. Now, since $\Phi$ is shift commuting, for each $g\in\NZ$ we have that $\s^g(\mathbf{x})\in C_b$ if, and only if, $\big(\Phi(\mathbf{x})\big)_g=\big(\s^g(\Phi(\mathbf{x}))\big)_{\mathbf{1}_\NZ}=\big(\Phi(\s^g(\mathbf{x}))\big)_{\mathbf{1}_\NZ}=b$. In other words, the local rule $\phi$ of $\Phi$ is defined exactly by the words of $\Lambda$ that define the sets $C_b$ and it is implicitly given by the simple function $$\big(\Phi(\cdot)\big)_g=\sum_{b\in B} b\mathbf{1}_{C_b}\circ \s^g(\cdot),$$ where each $C_b$ is a finite union of cylinders of $\Lambda$, the sum stands for the symbolic sum and $\mathbf{1}_{C_b}$ denotes the characteristic function of the se $C_b$.

The above observation leads us to propose that sliding block codes should not be defined as maps whose local rules have a bound on the number of entries used to define them, since this is just a consequence of the compactness of the shift space. The fundamental feature of a sliding block code is the fact that, for each $g\in\NZ$, the map $\big(\Phi(\cdot)\big)_g:\Lambda\to B$ is a simple function which does not depend on $g$ and such that, to decide the image of any $\mathbf{x}\in\Lambda$, one just needs to know a finite (yet variable) number of entries which form the neighborhood of $x_g$.

\begin{defn}[Generalized sliding block codes]\label{SBC_alternative}
Let $A$ and $B$ be two countable alphabets and let  $\Lambda\subset A^\NZ$ be a shift space. A map $\Phi:\Lambda \to B^\NZ$ is a {\em generalized sliding block code} if there exists $\{C_b\}_{b\in B}$ a partition of $\Lambda$ where each nonempty $C_b$ is a union of cylinders of $\Lambda$, such that
\begin{equation}\label{LR_block_code}\bigl(\Phi(\mathbf{x})\bigr)_g=\sum_{b\in B}b\mathbf{1}_{C_b}\circ\sigma^g(\mathbf{x}),\quad \forall \ \mathbf{x}\in\Lambda,\ \forall \ g\in\NZ, \end{equation} where  $\mathbf{1}_{C_b}$ is the
characteristic function of the set $C_b$ and $\sum$ stands for the symbolic sum.
\end{defn}

For a better illustration of Definition \ref{SBC_alternative}, suppose that $\NZ$ is equipped with a metric $\mathbf{d}$ which is invariant by translations (this is the case, for example, when $\NZ=\N^d$ or $\NZ=\Z^d$ for some integer $d\geq 1$). Then for a classical sliding block code $\Phi:\Lambda\to B^\NZ$, given by a local rule $\phi$ on $W_N(\Lambda)$, we define the radius of the sliding block code as $$r(\Phi):=\max_{i\in N}\mathbf{d}(i,1).$$
Notice that, due to the invariance by translation of the metric, for all $\mathbf{x}\in\Lambda$ and  $g\in\NZ$, to compute the value of $\big(\Phi(\mathbf{x})\big)_g$ it is sufficient to known all entries $x_j$ such that $\mathbf{d}(j,g)\leq r(\Phi)$. On the other hand, if $\Phi$ is a generalized sliding block code then it may not exist a value $r(\Phi)$ with the property above. However, Definition \ref{SBC_alternative} implies that, for all $\mathbf{x}\in\Lambda$ and $g\in\NZ$, there exists $$r(\mathbf{x},g):=\max_{i\in D}\mathbf{d}(i,1),$$ where $D$ is the set of indeces such that $\s^g(\mathbf{x})\in [(a_i)_{i\in D}]_D$ and $[(a_i)_{i\in D}]_D$ is a cylinder in some $C_b$. Therefore, to compute the value of  $\big(\Phi(\mathbf{x})\big)_g$, it is sufficient to known all entries $x_j$ such that $\mathbf{d}(j,g)\leq r(\mathbf{x},g)$.
In other words, while a classical sliding block code has a fixed radius that can be used for all $\mathbf{x}\in\Lambda$ and $g\in\NZ$, a generalized sliding block code has a variable radius, whose length depends on the configuration near $x_g$.
Next we give an example of a generalized sliding block code with variable radius and an example of a map which is not a generalized sliding block code and for which there exists $\mathbf{x}$ such that, to compute $\big(\Phi(\mathbf{x})\big)_g$ for any $g\in\NZ$, one needs to know infinite entries of $\mathbf{x}$.

\begin{ex}\label{example1} Denote by $\N$ the set of all nonnegative integers with the usual sum, suppose $\NZ:=A:=\N$, and consider the map $\Phi:A^\N\to A^\N$ defined by \begin{equation}\label{eq:example1}\big(\Phi(\mathbf{x})\big)_j=x_{j+x_j},\qquad \forall j\in\NZ.\end{equation} It follows that $\Phi$ is a generalized sliding block code where for each $b\in A$,
 $$\begin{array}{rcl}
 C_0&:=&\displaystyle [0]_0,\\\\
 \text{and, for all } b\neq 0,\\\\
  C_b&:=&\displaystyle\bigcup_{n\geq 1}\big[(w_i)_{i\in\{0,n\}}\big]_{\{0,n\}},\qquad \text{where}\quad w_0:=n\text{ and } w_n:=b\ \text{ for all } n\geq 1
 \end{array}
 $$

  Furthermore, it is direct that $\Phi$ is continuous and shift commuting.

\end{ex}

\begin{ex}
Let $A:=\NZ:=\N$ and let $\{A_\ell\}_{\ell\in\N}$ be a partition of $A$ into finite sets such that at least one of them has two or more elements. Let $\Lambda:=\bigcup_{\ell\in\N}A_\ell^\N$ and $\Phi:\Lambda\to\AZ$ be the map given by $$\big(\Phi(\mathbf{x})\big)_j=\max_{i\geq j}x_i.$$ It follows that $\Phi$ is not a generalized sliding block code and furthermore is not continuous (although it is shift commuting).

\end{ex}

\section{Curtis-Hedlund-Lyndon Theorem for generalized sliding block codes\sloppy}

In \cite[Theorem 1.9.1]{Ceccherini-Silberstein--Coornaert} it was proved that, whatever the cardinality of A is,  a map $\Phi:\Lambda\to B^\NZ$ is a sliding block code (according to Definition \ref{SBC_classic}) if, and only if, it is uniformly continuous and shift commuting. If $A$ is a finite alphabet then $\Lambda$ is compact and it follows that the family of continuous maps coincides with the family of uniformly continuous maps. Also due to the compactness of the space, the continuity of a map implies that there exists $N\subset \NZ$ such that, for all $\mathbf{x}\in\Lambda$ and $g\in\NZ$, the symbol $\big(\Phi(\mathbf{x})\big)_g$ depends only on the configuration $\mathbf{x}_{gN}$. In other words, the existence of a local rule based on a single neighborhood is linked to the uniform continuity of the map, which is an automatic consequence of the finiteness of the alphabet. However, for shift spaces over infinite alphabets, there exist continuous maps which are not uniformly continuous and, therefore are not classical sliding block codes. Using the notion of generalized sliding block codes, we can state a more general version of the Curtis-Hedlund-Lyndon Theorem:\\

\begin{theo} A map $\Phi:\Lambda\subset A^\NZ \to B^\NZ$ is a generalized sliding block if, and only if, it is continuous and commutes with all translations.
\end{theo}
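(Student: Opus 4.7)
The plan is to treat each direction separately. For the forward implication, assume $\Phi$ is a generalized sliding block code with partition $\{C_b\}_{b\in B}$, where each nonempty $C_b$ is a union of cylinders of $\Lambda$. Since each cylinder is clopen and $\{C_b\}_{b\in B}$ is a partition, every $C_b$ is in fact clopen (its complement is the union of the remaining open $C_{b'}$'s). To prove continuity of $\Phi$ at an arbitrary $\mathbf{x}\in\Lambda$, I would observe that for each $g\in\NZ$ the coordinate map $\mathbf{y}\mapsto \bigl(\Phi(\mathbf{y})\bigr)_g$ takes the value $b$ precisely on $(\sigma^g)^{-1}(C_b)$, which is open because $\sigma^g$ is continuous and $C_b$ is open. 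Hence each coordinate of $\Phi$ is locally constant, and continuity in the product topology follows coordinatewise. Shift commuting follows directly from the definition: $\bigl(\Phi(\sigma^h\mathbf{x})\bigr)_g = \sum_b b\mathbf{1}_{C_b}(\sigma^g\sigma^h\mathbf{x})$ equals $\bigl(\Phi(\mathbf{x})\bigr)_{gh}=\bigl(\sigma^h\Phi(\mathbf{x})\bigr)_g$ after invoking the composition law for translations.

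For the converse, assume $\Phi$ is continuous and shift commuting. I would define
\[
C_b \ :=\ \Phi^{-1}\bigl([b]_{\mathbf{1}_\NZ}\bigr)\qquad (b\in B).
\]
Since $\{[b]_{\mathbf{1}_\NZ}\}_{b\in B}$ is a clopen partition of $B^\NZ$ and $\Phi$ is continuous, $\{C_b\}_{b\in B}$ is a clopen partition of $\Lambda$. Because the cylinders $\{[(a_i)_{i\in D}]_D\cap\Lambda : D\subset\NZ \text{ finite}\}$ form a basis for the induced topology on $\Lambda$, each open set $C_b$ is a union of such cylinders of $\Lambda$, exactly as Definition \ref{SBC_alternative} demands. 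Finally, I would verify the local-rule formula: for any $\mathbf{x}\in\Lambda$ and $g\in\NZ$, the shift-commutation identity gives
\[
\bigl(\Phi(\mathbf{x})\bigr)_g \ =\ \bigl(\sigma^g\Phi(\mathbf{x})\bigr)_{\mathbf{1}_\NZ} \ =\ \bigl(\Phi(\sigma^g\mathbf{x})\bigr)_{\mathbf{1}_\NZ},
\]
so $\bigl(\Phi(\mathbf{x})\bigr)_g=b$ if and only if $\sigma^g(\mathbf{x})\in C_b$, which is precisely $\sum_{b\in B}b\,\mathbf{1}_{C_b}\bigl(\sigma^g(\mathbf{x})\bigr)$.

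The potential stumbling block, and the genuine content of the result, is to resist the reflex of classical sliding block code proofs, which at this stage use compactness of $\Lambda$ to reduce $C_b$ to a \emph{finite} union of cylinders and thus to extract a single finite neighborhood $N\subset\NZ$ valid for all $\mathbf{x}$ and all $g$. Here one must accept that $C_b$ may only be an infinite union of cylinders, and that this is exactly what Definition \ref{SBC_alternative} was tailored to allow; hence no uniform continuity is needed. Once this point is accepted, both directions reduce to routine checks of the interaction between continuity, the basic clopen structure given by cylinders, and the composition law of translations, with no appeal to compactness or uniformity anywhere.
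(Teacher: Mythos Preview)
Your proposal is correct and follows essentially the same route as the paper's proof: for the converse you define $C_b:=\Phi^{-1}([b]_{1})$ and use that cylinders form a basis, exactly as the paper does, and for the forward direction you verify shift commutation by the composition law of translations and continuity by observing that $(\sigma^g)^{-1}(C_b)$ is open, which is the same computation the paper carries out (phrased there as $\Phi^{-1}([b]_i)=(\sigma^i)^{-1}(C_b)$). The only differences are the order in which you treat the two implications and your added expository remark about not invoking compactness, neither of which constitutes a genuinely different argument.
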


\begin{proof}
Suppose $\Phi:\Lambda\to B^\NZ$ is continuous and commutes with all translations. For each $b\in B$, define $C_b:=\Phi^{-1}([b]_{1})$. It follows that $\{C_b\}_{b\in B}$ is a cover of $\Lambda$. Furthermore, since $[b]_{1}$ is clopen in $B^\NZ$ and $\Phi$ is continuous, then each $C_b$ is clopen in $\Lambda$ and therefore, if $C_b$ is not empty, it can be written as a union of cylinders of $\Lambda$. Now, if $\mathbf{x}\in\Lambda$ belongs to some set $C_b$ then $\big(\Phi(\mathbf{x})\big)_{1}=b$, that is, we have that
$\bigl(\Phi(\cdot)\bigr)_{1}=\sum_{b\in B}b\mathbf{1}_{C_b}(\cdot)$. Thus, since $\Phi$ commutes with all translations, it follows that
$\big(\Phi(\cdot)\big)_g=\big(\s^g\circ\Phi(\cdot)\big)_{1}=\big(\Phi\circ \s^g(\cdot)\big)_{1}=\sum_{b\in B}b\mathbf{1}_{C_b}\circ\s^g(\cdot)$ for all $g\in\NZ$.\\

Now suppose that $\Phi$ is a generalized sliding block code. First let us show that $\Phi$ is invariant by translations, that is, for all $h\in\NZ$ we have that $\s^h\circ\Phi=\Phi\circ\s^h$. In fact, given $h\in\NZ$, it follows that for any $\mathbf{x}\in\Lambda$ and any $g\in\NZ$ we have
$$\begin{array}{lcl}\big(\s^h\circ\Phi(\mathbf{x})\big)_g&=&\big(\s^g\circ\s^h\circ\Phi(\mathbf{x})\big)_{1}=\big(\s^{gh}\circ\Phi(\mathbf{x})\big)_{1}
=\big(\Phi(\mathbf{x})\big)_{gh}\\\\&=&\sum_{b\in B}b\mathbf{1}_{C_b}\circ\s^{gh}(\mathbf{x})
=\sum_{b\in B}b\mathbf{1}_{C_b}\circ\s^g\circ\s^h(\mathbf{x})=\big(\Phi\circ\s^h(\mathbf{x})\big)_{g}.\end{array}$$
To prove that $\Phi$ is continuous, notice that, if $E\subset\NZ$ is a finite set and $[(b_i)_{i\in E}]_E$ is a cylinder of $B^\NZ$ then
$$\Phi^{-1}\big([(b_i)_{i\in E}]_E\big)=\Phi^{-1}\left(\bigcap_{i\in E}[b_i]_i\right)=\bigcap_{i\in E}\Phi^{-1}\left([b_i]_i\right)$$
and hence it is enough to show that, for $i \in \NZ$, $\Phi^{-1}([b]_i)$ is an open set.
Now, since $\Phi$ is invariant by translations, for all $i\in\NZ$ and $b\in B$ we have that
$[b]_i=(\s^i)^{-1}\big([b]_{1}\big)$. Therefore
$\Phi^{-1}([b]_i)=\Phi^{-1}\circ(\s^{i})^{-1}\big([b]_{1}\big)=(\s^i)^{-1}\circ\Phi^{-1}\big([b]_{1}\big)=(\s^i)^{-1}(C_b)$,
and hence, since $C_b$ is a union of cylinders of $\Lambda$ and $\s^i$ is continuous for all $i\in\NZ$ (see \cite[Proposition 1.2.2]{Ceccherini-Silberstein--Coornaert}), we have that $(\s^i)^{-1}(C_b)$ is also an union of cylinders of $\Lambda$ and so it is open as desired.

\end{proof}

%=======================================================================================================================

\section*{Acknowledgments}

\noindent M. Sobottka was supported by CNPq-Brazil grants 304813/2012-5 and 480314/2013-6. The first version of this paper was carried out while the author was invited as researcher visitor at PIPGEs/UFSCar and ICMC/USP-S\~{a}o Carlos. The author thanks both institutions and their respective graduate programs for the financial support and hospitality.

\noindent D. Gon\c{c}alves was partially supported by Capes grant PVE085/2012 and CNPq.

%====================================================== BIBLIOGRAFIA =================================================================

\end{document}